\newtheorem{theorem}{Theorem}
\newtheorem{lemma}[theorem]{Lemma}
\newtheorem{corollary}[theorem]{Corollary}
\newtheorem{conjecture}[theorem]{Conjecture}
\newcommand{\Pb}{\mathbb{P}}
\newcommand{\N}{\mathbb{N}}
\newcommand{\R}{\mathbb{R}}
\newcommand{\Q}{\mathbb{Q}}
\newcommand{\Bin}{\mathrm{Bin}}
\newcommand{\rc}{\mathrm{rc}}
\newcommand{\diam}{\mathrm{diam}}
\newcommand{\Gnp}{\mathcal{G}(n,p)}
\newcommand{\mc}{\mathcal}
\title{On the threshold for rainbow connection number $r$ in random graphs}
\renewcommand{\leq}{\leqslant} 
\renewcommand{\geq}{\geqslant}
\renewcommand{\le}{\leqslant}
\renewcommand{\ge}{\geqslant}
\renewcommand{\epsilon}{\varepsilon}
\author{Annika Heckel and Oliver Riordan
\thanks{Mathematical Institute, University of Oxford,
24--29 St Giles', Oxford OX1 3LB, UK. E-mail: 
\texttt{$\{$heckel,riordan$\}$@maths.ox.ac.uk}
}
}
\begin{document}
\maketitle

\begin{abstract}
We call an edge colouring of a graph $G$ a \emph{rainbow colouring} if every pair of vertices is joined by a \emph{rainbow path}, i.e., a path where no two edges have the same colour. The minimum number of colours required for a rainbow colouring of the edges of $G$ is called the \emph{rainbow connection number} (or \emph{rainbow connectivity}) $\rc(G)$ of $G$. We investigate sharp thresholds in the Erd\H{o}s--R{\'e}nyi random graph for the property ``$\rc(G)\le r$'' where $r$ is a fixed integer. It is known that for $r=2$, rainbow connection number $2$ and diameter $2$ happen essentially at the same time in random graphs. For $r \ge 3$, we conjecture that this is not the case, propose an alternative threshold, and prove that this is an upper bound for the threshold for rainbow connection number $r$.
\end{abstract}

\section{Introduction}

The rainbow connection number is a measure of the connectivity of a graph introduced in 2008 by Chartrand, Johns, McKeon and Zhang \cite{chartrand:rainbow}, which has recently attracted the attention of a number of researchers (see \cite{li:rainbowsurvey}).

An edge colouring of a graph $G$ is called a \emph{rainbow colouring} if every pair of vertices is joined by a \emph{rainbow path}, i.e., a path where no two edges have the same colour. The minimum number of colours required for a rainbow colouring of the edges of $G$ is called the \emph{rainbow connection number} (or \emph{rainbow connectivity}) $\rc(G)$ of $G$.

In this paper we investigate the rainbow connection numbers of random graphs. As usual, we say that an event $E=E(n)$ holds \emph{with high probability} (whp) if $\lim_{n \rightarrow \infty} \Pb(E) =1$. For two functions $f, g: \N \rightarrow \R$, we write $f = O(g)$ if there are constants $C$ and $n_0$ such that $|f(n)| \le Cg(n)$ for all $n\ge n_0$, and $f = o(g)$ if $f(n)/g(n)\rightarrow 0$ as $n \rightarrow \infty$. Furthermore, we say $f = \Theta(g)$ if $f = O(g)$ and $g= O(f)$. Finally, we write $f= O^*(g)$ if there are constants $C$ and $n_0$ such that $|f(n)| \le (\log n)^C g(n)$ for all~$n\ge n_0$, where $\log n$ denotes the natural logarithm.

 Recall that the \emph{Erd\H{o}s--R{\'e}nyi random graph} with edge probability $p=p(n)$, denoted by $G \sim \Gnp$, is a graph with $n$ vertices where each of the ${n \choose 2}$ possible edges is included with probability $p$, independently of the others.  A sequence $p^*=p^*(n)$ is called a \emph{sharp threshold} for a graph property $\mc{P}$ if for all constants $c<1, C>1$, if $p(n) <cp^*(n)$ then whp $\Gnp \notin \mc{P}$, and if $p(n) >Cp^*(n)$ then whp $\Gnp \in \mc{P}$. Recall that for a (weak) threshold, the conditions assume $p=o(p^*)$ and $p^*=o(p)$, respectively. We also use the non-standard notion of a \emph{semisharp threshold}, which only requires the existence of some constants $c, C >0$ such that the properties above hold.

We are interested in the graph property
\[
 \mc{R}_r =\{ G: \rc(G) \le r\}
\]
of having rainbow connection number at most $r$. Caro, Lev, Roditty, Tuza and Yuster \cite{caro:rainbow} showed that $\sqrt{\frac{\log n}{n}}$ is a semisharp threshold for $\mc{R}_2$, and He and Liang \cite{he:rainbow} proved that for general $r \ge 2$, $\frac{(\log n)^{1/r}}{n^{1-1/r}}$ is a semi-sharp threshold for $\mc{R}_r$. As observed by Friedgut \cite{friedgut:sharpthresholdandksat}, a coarse threshold can only occur near rational powers of $n$. From Theorem 1.4 in \cite{friedgut:sharpthresholdandksat}, if the semi-sharp threshold for $\mc{R}_r$ were not sharp, there would be a sequence $(n_k)$ and $p(n_k) = \Theta \left( \frac{(\log n_k)^{1/r}}{n_k^{1-1/r}}\right)$ such that $b_1 n_k^\alpha \le p(n_k) \le b_2  n_k^\alpha$ for some constants $b_1, b_2 \in \R$ and $\alpha \in \Q$, a contradiction. Hence, a \emph{sharp threshold} for the property $\mc{R}_r$ exists.

Bollob\'as \cite{bollobas:diameter} showed that for any fixed $r \ge 2$, $\frac{(2\log n)^{1/r}}{n^{1-1/r}}$ is a sharp threshold for the graph property
\[
 \mc{D}_r = \{ G: \diam(G) \le r\}
\]
of having diameter at most $r$. In particular, $\mc{R}_r$ and $\mc{D}_r$ have the same weak threshold. It is an easy observation that for any connected graph $G$, $\rc(G) \ge \diam(G)$, and therefore $\frac{(2\log n)^{1/r}}{n^{1-1/r}}$ is a lower bound for the sharp threshold of $\mc{R}_r$.

In \cite{rainbowhittingtime}, we showed that for $r=2$, the hitting times of the properties $\mc{R}_2$ and $\mc{D}_2$ coincide whp, so in particular these properties have the same sharp threshold. It is a natural question to ask whether this result may be extended to $r \ge 3$. However, it seems that the situation for $r \ge 3$ is fundamentally different from the case $r=2$, and the methods used in \cite{rainbowhittingtime} do not carry over to the general case. In fact, there are good reasons to believe that the following may be the true sharp threshold for $\mc{R}_r$ where $r \ge 3$.

\begin{conjecture}
\label{thresholdconjecture}
Fix an integer $r \ge 3$, set $C= \frac{r^{r-2}}{(r-2)!}$, and let
\begin{equation}
\label{conjecturedthreshold}
p(n)=\frac{\left(C\log n\right)^{1/r}}{n^{1-1/r}}.
\end{equation}
Then $p(n)$ is a sharp threshold for the graph property $\mc{R}_r$.
\end{conjecture}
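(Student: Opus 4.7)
Although Conjecture~\ref{thresholdconjecture} asserts a sharp threshold, the abstract indicates that only the upper-bound direction is established in the paper; the matching lower bound (producing an explicit rainbow-connectivity obstruction below $p^*$) remains genuinely conjectural and I will not attempt it here. I focus on showing that, for any fixed $\epsilon > 0$ and $p \ge (1+\epsilon)(C\log n)^{1/r}/n^{1-1/r}$, we have $\rc(G) \le r$ whp.

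The natural first attempt is a uniform random colouring: colour each edge of $G$ independently with one of $r$ colours and hope that every pair is joined by a rainbow path of length exactly $r$. A direct count yields, for a fixed pair $(u,v)$,
\[
\E[\#\,\text{rainbow $u$-$v$ paths of length $r$}] \sim n^{r-1}p^r \cdot \frac{r!}{r^r} = (1+\epsilon)^r \cdot \frac{r-1}{r}\log n,
\]
where the final equality uses $C = r^{r-2}/(r-2)!$. Janson's inequality, combined with the fact that overlap pairs contribute lower-order variance provided $np \to \infty$, gives
\[
\Pr[\text{pair $(u,v)$ has no rainbow path}] \le \exp\!\bigl(-(1-o(1))(1+\epsilon)^r(r-1)(\log n)/r\bigr).
\]
For small $\epsilon$ this exponent is strictly less than $2$, so the union bound over the $\binom{n}{2}$ pairs fails. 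Hence the conjectured constant $C$ is \emph{not} attainable by uniform random colouring plus a crude union bound, which is precisely what makes the statement non-trivial.

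To close this gap, my plan is two-round exposure combined with a partly deterministic colouring. Split $p = p_1 + p_2$ with $p_1 = (1+\epsilon/3)p^*$ and $p_2 = \Theta(p^*)$, and reveal $G_1 \sim \mathcal{G}(n,p_1)$ first. Standard neighbourhood-exposure arguments show that in $G_1$ each depth-$k$ BFS ball has size $\sim (np_1)^k$ for $k \le \lceil r/2 \rceil$. Now colour the edges of $G_1$ uniformly at random; then, conditional on $G_1$ and this colouring, colour the edges of $G_2$ greedily so as to repair pairs $(u,v)$ lacking a rainbow $G_1$-path. The key point is that $G_2$-edges are independent of $G_1$ and can be routed through the large, near-independent BFS boundaries around $u$ and $v$, so with positive probability the set of bad pairs is small enough to be entirely fixed by a suitable choice of $G_2$-colouring.

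The main obstacle is matching the precise constant $C = r^{r-2}/(r-2)!$. The appearance of $r^{r-2}$, Cayley's number of labelled trees on $r$ vertices, suggests that the correct combinatorial object in the Janson calculation is not a single length-$r$ path but a labelled subtree on $r$ vertices joining $u$ and $v$; each such tree generates several candidate rainbow paths and so multiplies the effective expectation, ideally pushing the exponent displayed above past $2$. Carrying out this tree-based Janson argument rigorously, and in particular controlling the more intricate overlap patterns between pairs of trees rather than pairs of paths, is where I expect the main technical work to lie.
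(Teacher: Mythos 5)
You correctly scope the claim to the upper-bound direction, and your calculation of the expected rainbow-path count and the observation that a Janson-plus-union-bound argument cannot close the gap at the conjectured constant are both right. But the two repair mechanisms you sketch do not hold up. The two-round exposure with a ``greedy'' $G_2$-colouring is not developed far enough to evaluate: you do not explain how the greedy choices for one bad pair can be made without destroying another pair's repair or an already-good pair's only rainbow path, and this interference problem is exactly the hard part (the number of bad pairs at the conjectured threshold is $n^{1+1/r+o(1)}$, which is far too many to handle by generic sparsity arguments). More seriously, your tree reinterpretation of $C=r^{r-2}/(r-2)!$ is a dead end. The factor $r^{r-2}$ is not being used as Cayley's formula; it is just algebra arranged so that $\frac{r!}{r^r}\cdot C=\frac{r-1}{r}$, i.e.\ so that the expected number of rainbow $r$-paths between a fixed pair at threshold is $\frac{r-1}{r}\log n$. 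There is no tree enumeration anywhere in the argument: a tree on $r$ vertices has a \emph{unique} path between any two of its vertices and only $r-1$ edges, so a labelled tree does not ``generate several candidate rainbow paths'' of length $r$, and the parameter counting does not even match. Pursuing that direction would not move the exponent.

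What the paper actually does is keep the single uniform random colouring of all of $G$, and then repair it locally. A pair is called \emph{dangerous} if it has at most $K$ independent rainbow $r$-paths in the random colouring; the key structural step is Lemma~\ref{maindistinct}, which shows that whp no pair $\{v,w\}$ admits $L$ independent $v$--$w$ $r$-paths, each sharing an edge with an $r$-path to a \emph{distinct} dangerous pair. Given this, one iterates over dangerous pairs, each time selects an $r$-path none of whose edges has been ``flagged'' by an earlier repair, recolours it to be rainbow, and flags its edges; Lemma~\ref{maindistinct} guarantees such an unflagged path always exists, and the same lemma ensures no previously non-dangerous pair loses all its rainbow paths along the way. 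The quantitative heart is that each dangerous pair costs roughly $n^{-(r-1)(1+\epsilon/2)/r + o(1)}$, so the bad event for a single $\{v,w\}$ has probability about $\big(n^{2r-2}p^{2r-1}\cdot n^{-(r-1)(1+\epsilon/4)/r}\big)^{L}=n^{-\Theta(\epsilon L)}$, which beats the union bound once $L$ is large. Your instinct that the naive approach needs a repair phase is right, but you would need something like this ``dangerous pairs do not cluster'' lemma, not a tree-based Janson refinement, to make the argument go through.
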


In one direction, there is a heuristic argument that (\ref{conjecturedthreshold}) is a lower bound for the sharp threshold for $\mc{R}_r$. Let $\epsilon>0$ and $p= \frac{\left(C(1-\epsilon)\log n\right)^{1/r}}{n^{1-1/r}}$, and colour the edges of $G\sim \Gnp$ with $r$ colours independently and uniformly at random. For a given pair of vertices $v,w$, the expected number of rainbow paths joining them is about $\frac{r!}{r^r}n^{r-1}p^r = (1-\epsilon)\left(1-\frac{1}{r}\right)\log n$, and the probability that $v$ and $w$ are not joined by any rainbow path is about $n^{-(1-\epsilon)\left(1-\frac{1}{r}\right)}$. The expected number of pairs of vertices not joined by any rainbow path is therefore $\Theta\left(n^{1+\frac{1}{r}+\epsilon\left(1-\frac{1}{r}\right)}\right)$. Assuming that these events behave roughly independently for different pairs of vertices, we would expect the overall probability of the random colouring being a rainbow colouring to be about $ \exp\left(-\Theta\left(n^{1+\frac{1}{r}+\epsilon\left(1-
\frac{1}{r}\right)}\right)\right) $. Conditional on $G$ having $O^*\left(n^{1+\frac{1}{r}}\right)$ edges, which holds with very high probability, there are $\exp \left(O^*\left(n^{1+\frac{1}{r}}\right)\right)$ possible edge colourings. The probability that there exists at least one rainbow colouring is then bounded by the total number of colourings multiplied with the probability that a random colouring is a rainbow colouring, which tends to~$0$.

The best known upper bound for the sharp threshold of the property $\mc{R}_r$ is $\frac{(2^{20r} \log n)^{1/r}}{n^{1-1/r}}$, which was shown in the proof of the semi-sharp threshold in \cite{he:rainbow}. 
We will establish the other direction of Conjecture \ref{thresholdconjecture} in this article, i.e., we will prove the following result.
\begin{theorem}
\label{upperboundtheorem}
 Fix an integer $r \ge 3$ and $\epsilon >0$. Set $p=p(n)=\frac{(C(1+\epsilon)\log n)^{1/r}}{n^{1-1/r}}$, and let $G \sim \Gnp$. Then whp, $\rc(G) = r$.
\end{theorem}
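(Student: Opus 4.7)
Since $C = r^{r-2}/(r-2)! \ge 3 > 2$ for $r \ge 3$, the chosen $p$ lies above Bollob\'as's sharp threshold for diameter $\le r$ and polynomially below the sharp threshold for diameter $\le r-1$, so whp $\diam(G) = r$ and hence $\rc(G) \ge r$. The content of the theorem is therefore the matching upper bound $\rc(G) \le r$ whp.

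The natural first attempt is to colour the edges of $G$ uniformly at random with $r$ colours. For any fixed pair $u,v$, the expected number of length-$r$ rainbow $u$--$v$ paths is
\[
\frac{r!}{r^r}\binom{n-2}{r-1}(r-1)!\,p^r \;=\; (1-1/r)(1+\epsilon)\log n\,(1+o(1)),
\]
as in the heuristic preceding the theorem, and hence the probability of having no rainbow path is of order $n^{-(1-1/r)(1+\epsilon)+o(1)}$. This yields a direct union bound over pairs only when $\epsilon > (r+1)/(r-1)$, so for smaller $\epsilon$ we expect a polynomial number of ``bad'' pairs,
\[
\E|B| \;=\; n^{\,1 + 1/r - \epsilon(1-1/r) + o(1)},
\]
after a uniform random colouring. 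Closing this polynomial gap in the exponent is the crux of the proof.

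My plan is to close it by sprinkling. Decompose $p = p_1 + p_2$ with $p_2 = p/(\log n)^c$ for a suitable constant $c$, and sample $G_1 \sim \Gnp[p_1]$ and $G_2 \sim \Gnp[p_2]$ independently, so that $G_1 \cup G_2$ has the distribution of $G$. Colour the edges of $G_1$ uniformly at random with $r$ colours. Using the fact that in $G_1$ most pairs of length-$r$ paths between two fixed vertices are edge-disjoint, a Janson-type (or refined second-moment) bound applied to the collection of rainbow-path events for each pair $(u,v)$ gives per-pair failure probability at most $n^{-(1-1/r)(1+\epsilon)+o(1)}$, so that whp $|B| \le n^{1+1/r-\epsilon(1-1/r)+o(1)}$. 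Then, for each bad pair $(u,v) \in B$, examine the length-$r$ paths in $G_1$ from $u$ to $v$ that are ``almost rainbow'', i.e.\ use only $r-1$ distinct colours with exactly one repeated colour. A suitably coloured sprinkled edge of $G_2$, used to replace one of the two colour-colliding edges of such a path, turns it into a genuine rainbow length-$r$ path. Counting the available almost-rainbow $G_1$-paths and the available $G_2$-edges, one shows that the expected number of pairs in $B$ that fail to be patched in this way is $o(1)$, yielding $\rc(G) \le r$ whp.

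The main obstacle is the patching step. One has to argue that the conditional structure of a bad pair --- after conditioning on the $G_1$-colouring producing no rainbow $u$--$v$ path --- still provides many almost-rainbow paths and many $G_2$-edges suited to completing them. This requires delicate control of the correlation between the event ``$(u,v) \in B$'' (a function of $G_1$ and its colouring) and the presence of useful $G_2$-edges in the right neighbourhoods, together with a careful choice of colours for the sprinkled edges so that the final colouring on $G_1 \cup G_2$ still uses only $r$ colours and yields rainbow paths for every pair simultaneously. This conditional-structure analysis is where I would expect the bulk of the technical work to lie.
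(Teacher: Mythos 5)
Your approach is genuinely different from the paper's, which does not use sprinkling at all: the paper colours all of $G$ uniformly at random, identifies the \emph{dangerous} pairs (those with few independent rainbow $r$-paths), and then \emph{recolours} one $r$-path per dangerous pair, using a structural lemma to guarantee these recoloured paths can be chosen edge-disjoint from each other and from paths serving other dangerous pairs. Unfortunately, the sprinkling route you propose has problems that go beyond the ``bulk of the technical work'' you flag at the end.

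First, the patching mechanism as stated is ill-posed. You cannot ``replace one of the two colour-colliding edges'' of an almost-rainbow $G_1$-path by a sprinkled edge: a sprinkled edge between the same two vertices would be a parallel edge, which does not exist in a simple graph, and a sprinkled edge between \emph{different} vertices does not reconstitute a length-$r$ $u$--$v$ path. The closest workable variant is to look for a rainbow $(r-1)$-path in $G_1$ from $u$ to some vertex $y$ together with a sprinkled edge $yv$, whose colour you then choose freely; this at least makes sense combinatorially.

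Second, and more seriously, even this corrected version fails quantitatively. Writing $p_2 = \delta p$ for some $\delta \in (0,1)$ (or $\delta = (\log n)^{-c}$), the expected number of such patches for a fixed bad pair is of order $\delta\,(np_1)^{r-1} p \sim \delta(1-\delta)^{r-1}(r-1)(1+\epsilon)\log n$, giving a per-pair patching failure probability $\approx n^{-\delta(1-\delta)^{r-1}(r-1)(1+\epsilon)}$. Meanwhile, sparsifying $G_1$ to edge probability $p_1 = (1-\delta)p$ pushes the effective $\epsilon$ down to $\epsilon' \approx (1-\delta)^r(1+\epsilon)-1$, so the number of bad pairs is larger, $\approx n^{1+1/r - \epsilon'(1-1/r)}$. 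Working out the exponents, the gain from patching exactly cancels the loss from sparsifying $G_1$: the union bound $(\text{patching failure})\times|B| = o(1)$ reduces, after the algebra, to $\epsilon(1-1/r) > 1 + 1/r$, which is precisely the range of $\epsilon$ in which you have already observed that a direct union bound over pairs works. Sprinkling thus buys you nothing here. Taking $p_2 = p/(\log n)^c$ with $c>0$ is even worse, since then the per-pair patching failure probability is $\exp(-\Theta((\log n)^{1-c})) = n^{-o(1)}$, which cannot beat a polynomial number of bad pairs. Finally, the colour-consistency problem you acknowledge --- that different bad pairs may demand incompatible colours on the same sprinkled edge --- is itself a genuine combinatorial obstruction, and the paper's main structural lemma is precisely the tool that resolves the analogous conflict in the recolouring argument; it is not something one can wave away.

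The missing idea is that you do not need new edges at all: for a bad (dangerous) pair there are $\Theta(\log n)$ \emph{non-rainbow} $r$-paths in $G$ available, and you can simply recolour one of them to be rainbow. The cost of this recolouring is that it may destroy rainbow paths for other pairs, but the paper's Lemma~\ref{maindistinct} shows that no pair can have many of its $r$-paths pass through edges that lie on $r$-paths of dangerous pairs, so one can greedily ``flag'' and recolour one path per dangerous pair without ever running out of usable paths, and without wrecking any non-dangerous pair. This is a strictly stronger and qualitatively different move than adding sprinkled edges.
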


\section{Proof of Theorem \ref{upperboundtheorem}}

Let $G= (V,E)\sim \Gnp$. The basic idea of the proof is as follows. First we colour the edges of $G$ independently and uniformly at random using $r$ colours. We call a pair of vertices \emph{dangerous} if it is joined by at most $K$ rainbow paths of length $r$ in this colouring, where $K$ is a constant which will be defined later.

For each dangerous pair, we will select one path joining it and change the colours of the edges to make it a rainbow path, which will yield a rainbow colouring. To see that this is possible without any conflicts and that this does not rainbow-disconnect the pairs that previously had many rainbow paths, we need to study the structure of the graph and its dangerous pairs.

The proof is organised as follows. In Section \ref{preliminaries} some of the known bounds needed for our later work will be reviewed. Section \ref{generalobservations} contains general observations on the distribution of edges of each colour and paths of length $r$ in the randomly coloured graph. The heart of the proof is Section \ref{themainlemma}, where the key lemma is proved. This lemma ensures that when we later select a path of length $r$ for every dangerous pair of vertices and recolour it to make it a rainbow path, it is possible to do so without using any edges from a path that was previously assigned to another dangerous pair of vertices. Finally, in Section \ref{finishingtheproof}, the recolouring procedure will be described in detail and it will be shown that we can indeed find a rainbow $r$-colouring of the edges of $G$ with this strategy.

\subsection{Preliminaries}
\label{preliminaries}

We will need a number of bounds for the tail of binomial distributions, which are derived from the well-known Chernoff bounds (\cite{chernoffbound}, see also \cite[p.26]{janson:randomgraphs}):

\begin{lemma}
\label{chernoffbound}
Let $X$ be distributed binomially with parameters $n $ and $p $, and let $0<x<1$. 
\begin{enumerate}[(i)]
 \item If $x \geq p$, then $\Pb(X \geq nx) \leq \left[ \left( \frac{p}{x}  \right)^x \left( \frac{1-p}{1-x}\right)^{1-x} \right]^n$.
 \item If $x \leq p$, then $\Pb(X \leq nx) \leq \left[ \left( \frac{p}{x}  \right)^x \left( \frac{1-p}{1-x}\right)^{1-x} \right]^n$.
\end{enumerate}
\end{lemma}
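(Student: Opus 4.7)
The plan is to derive both inequalities by the standard exponential moment (Chernoff) method, treating $X = \sum_{i=1}^n X_i$ as a sum of i.i.d.\ Bernoulli$(p)$ random variables so that $\E[e^{tX}] = (1-p+pe^t)^n$ for every real $t$.

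For part (i), I would start from Markov's inequality applied to $e^{tX}$ with an as-yet-undetermined $t>0$: since $x\ge p$, the event $\{X\ge nx\}$ coincides with $\{e^{tX}\ge e^{tnx}\}$, so
\[
\Pr(X\ge nx)\le e^{-tnx}\,\E[e^{tX}] = \bigl(e^{-tx}(1-p+pe^t)\bigr)^n.
\]
The next step is to minimise the base of the exponent in $t$. Differentiating $-tx+\log(1-p+pe^t)$ and setting the derivative to zero leads to
\[
e^t=\frac{x(1-p)}{p(1-x)},
\]
and the hypothesis $x\ge p$ guarantees $e^t\ge 1$, i.e.\ $t\ge 0$, so this critical point is admissible. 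Substituting back gives $1-p+pe^t=(1-p)/(1-x)$ and $e^{-tx}=\bigl(p(1-x)/(x(1-p))\bigr)^x$, and after collecting terms the bound collapses to
\[
\Pr(X\ge nx)\le\left[\left(\tfrac{p}{x}\right)^x\left(\tfrac{1-p}{1-x}\right)^{1-x}\right]^n,
\]
as claimed.

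For part (ii), I would run exactly the same argument with $t<0$; now $\{X\le nx\}=\{e^{tX}\ge e^{tnx}\}$, and the optimisation produces the same critical value of $e^t$, which this time lies in $(0,1]$ because $x\le p$. Alternatively, one can apply part (i) to the random variable $n-X\sim\mathrm{Bin}(n,1-p)$ with the threshold $n(1-x)\ge n(1-p)$; this substitution turns the upper-tail bound into precisely the claimed lower-tail bound, with the expression inside the brackets unchanged by the symmetry $x\leftrightarrow 1-x$, $p\leftrightarrow 1-p$.

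There is no real obstacle here: both steps are standard and the only mildly delicate point is checking that the optimising $t$ has the correct sign, which in each part is supplied by the hypothesis relating $x$ and $p$.
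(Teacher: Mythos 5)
Your argument is correct and complete: the exponential moment bound, the optimisation yielding $e^t = \frac{x(1-p)}{p(1-x)}$, the sign check that the hypotheses $x\ge p$ (resp.\ $x\le p$) make $t\ge 0$ (resp.\ $t\le 0$), and the algebraic simplification all go through. Note, however, that the paper does not prove this lemma at all --- it is stated with a bare \verb|\qed| and attributed to Chernoff and to Janson, {\L}uczak and Ruci\'nski --- so there is no "paper proof" to compare against; you have simply supplied the standard textbook derivation that the authors chose to cite rather than reproduce. Your alternative route for part (ii), passing to $n-X\sim\Bin(n,1-p)$ and invoking the symmetry $x\leftrightarrow 1-x$, $p\leftrightarrow 1-p$, is a clean way to avoid redoing the optimisation and is exactly how most references handle the lower tail.
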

\qed

The following corollary of the Chernoff bounds is given in Theorem 2.1 in \cite{janson:randomgraphs}.
\begin{corollary}
\label{newchernoffcorollary}
Let $X$ be distributed binomially with parameters $n$ and $p$, and let $\varphi(x) = (1+x) \log (1+x)-x$ for $x \ge -1$ and $\varphi(x)=-\infty$ otherwise. Then for all $t \ge 0$,
\[
 \Pb(X \leq np - t) \le  \exp\left(-np \varphi \left(-\frac{t}{np}\right)\right).
\]
\end{corollary}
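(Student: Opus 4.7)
The plan is to derive Corollary \ref{newchernoffcorollary} directly from Lemma \ref{chernoffbound}(ii), by comparing two closely related functional forms of the lower-tail Chernoff bound. For $t \in [0, np)$, set $x = p - t/n \in (0, p]$, so that $nx = np - t$ and Lemma \ref{chernoffbound}(ii) gives
\[
\Pb(X \le np - t) \le \exp\bigl(-n D(x\|p)\bigr),
\]
where $D(x\|p) := x\log(x/p) + (1-x)\log((1-x)/(1-p))$ is the binary relative entropy. It therefore suffices to prove that $D(x\|p) \ge p\,\varphi(-t/(np))$.

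To this end, I would substitute $u = -t/(np) \in (-1, 0]$, so that $x = p(1+u)$. The first summand of $D(x\|p)$ becomes exactly $p(1+u)\log(1+u)$, which already matches the corresponding piece of $p\,\varphi(u) = p(1+u)\log(1+u) - pu$. Cancelling these terms, the desired inequality reduces to
\[
g(v) := (1-v)\log\tfrac{1-v}{1-p} + v - p \;\ge\; 0, \qquad v := p(1+u) \in (0, p].
\]
A one-line differentiation yields $g'(v) = -\log((1-v)/(1-p))$, which is negative for $v < p$, zero at $v = p$, and positive for $v > p$; combined with the explicit value $g(p) = 0$, this shows that $v = p$ is the global minimum and hence $g \ge 0$ throughout its domain.

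Finally, I would dispose of the boundary cases. When $t = np$, we have $x = 0$, which falls outside the scope of Lemma \ref{chernoffbound}, but directly $\Pb(X \le 0) = (1-p)^n \le e^{-np} = \exp(-np\,\varphi(-1))$, using $\varphi(-1) = 1$ together with $\log(1-p) \le -p$. When $t > np$, one has $-t/(np) < -1$, so $\varphi(-t/(np)) = -\infty$ by definition and the claimed upper bound is $\exp(+\infty) = +\infty$, so there is nothing to prove. The only step that is not purely formal is the calculus inequality $g \ge 0$, and that is entirely routine; I do not anticipate any serious obstacle.
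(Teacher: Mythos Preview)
Your derivation is correct: the reduction to the relative-entropy bound via Lemma~\ref{chernoffbound}(ii), the calculus verification of $g\ge 0$, and the handling of the boundary cases $t=np$ and $t>np$ are all sound. Note, however, that the paper does not actually prove this corollary at all---it simply quotes it as Theorem~2.1 of Janson--\L uczak--Ruci\'nski and marks it \qed. Your argument is thus not a different route so much as a self-contained proof where the paper is content to cite; it has the modest advantage of making the paper independent of the reference for this particular inequality.
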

\qed

In many applications it suffices to use the following more convenient bound (see Corollary 2.3 in \cite{janson:randomgraphs}).
\begin{corollary}
\label{chernoffcorollary}
Let $X$ be distributed binomially with parameters $n $ and $p $. If $0 <\epsilon \leq \frac{3}{2}$, then
\[
\Pb\left( \left| X - np \right| \geq \epsilon np \right) \leq 2 e^{-\epsilon^2 np /3}.
\]
\end{corollary}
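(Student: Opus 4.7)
The plan is to apply the one-sided Chernoff bounds from Lemma \ref{chernoffbound} and Corollary \ref{newchernoffcorollary} to each tail separately, and then reduce the problem to an elementary inequality for the function $\varphi(x) = (1+x)\log(1+x) - x$.

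First I would split the two-sided event via the union bound,
\[
\Pb(|X - np| \ge \epsilon np) \le \Pb(X \ge (1+\epsilon)np) + \Pb(X \le (1-\epsilon)np).
\]
For the lower tail, Corollary \ref{newchernoffcorollary} applied with $t = \epsilon np$ yields $\Pb(X \le (1-\epsilon)np) \le \exp(-np\,\varphi(-\epsilon))$, interpreting the right-hand side as $0$ (via $\varphi(-\epsilon) = +\infty$) whenever $\epsilon > 1$. For the upper tail, an essentially identical derivation from Lemma \ref{chernoffbound}(i) applied with $x = (1+\epsilon)p$ gives $\Pb(X \ge (1+\epsilon)np) \le \exp(-np\,\varphi(\epsilon))$. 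The corollary therefore reduces to proving
\[
\varphi(\epsilon) \ge \epsilon^2/3 \text{ for } 0 < \epsilon \le 3/2, \qquad \varphi(-\epsilon) \ge \epsilon^2/3 \text{ for } 0 < \epsilon \le 1.
\]

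Both inequalities are one-variable calculus. For the upper-tail inequality I would set $g(\epsilon) = \varphi(\epsilon) - \epsilon^2/3$, compute $g(0) = g'(0) = 0$ and $g''(\epsilon) = 1/(1+\epsilon) - 2/3$, and observe that $g''$ is positive on $[0, 1/2)$ and negative on $(1/2, 3/2]$. Thus $g$ is convex-increasing up to $\epsilon = 1/2$ and then concave, so it suffices to verify nonnegativity at the right endpoint; the explicit computation $g(3/2) = (5/2)\log(5/2) - 9/4 > 0$ closes this case. For the lower-tail inequality, set $h(\epsilon) = \varphi(-\epsilon) - \epsilon^2/3$ on $[0,1)$; then $h(0) = h'(0) = 0$ and $h''(\epsilon) = 1/(1-\epsilon) - 2/3 > 0$, so $h \ge 0$ throughout. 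Summing the two exponential bounds yields the claimed $2e^{-\epsilon^2 np / 3}$.

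The only genuine obstacle is handling the upper-tail inequality near $\epsilon = 3/2$: a naive Taylor argument would give $\varphi(\epsilon) \ge \epsilon^2/2 - \epsilon^3/6$, which is not strong enough over the whole interval, so the cutoff $3/2$ has to be built in. The sign-change analysis of $g''$ (together with checking the single endpoint value) is precisely what makes the constant $3$ tight on this range. Everything else is bookkeeping and a union bound.
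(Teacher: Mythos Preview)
Your argument is correct; this is precisely the standard derivation of the multiplicative Chernoff bound, and nothing in it fails. Two very minor points of polish. First, the upper-tail bound $\Pb(X \ge (1+\epsilon)np) \le \exp(-np\,\varphi(\epsilon))$ does not drop out of Lemma~\ref{chernoffbound}(i) quite as immediately as you suggest: after setting $x=(1+\epsilon)p$ one still needs to check that the residual factor $\bigl(\tfrac{1-p}{1-(1+\epsilon)p}\bigr)^{1-(1+\epsilon)p}$ is at most $e^{\epsilon p}$, which follows from $\log(1+y)\le y$. Second, the sentence ``convex-increasing up to $1/2$ and then concave, so it suffices to verify nonnegativity at the right endpoint'' is slightly compressed---concavity alone does not force the minimum to the right endpoint---but your intended argument (that on $[1/2,3/2]$ the concave function $g$ attains its minimum at one of the two endpoints, and $g(1/2)>0$ from the first half) is of course sound.

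As for comparison with the paper: there is nothing to compare. The paper does not prove this corollary at all; it is stated with a bare \qed{} and attributed to Corollary~2.3 of Janson, {\L}uczak and Ruci\'nski. Your write-up is essentially how that reference proves it, so you have filled in what the paper outsources.
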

\qed

The following consequence of the Chernoff bounds can be obtained by applying Lemma \ref{chernoffbound} to $X_i$ with $x_i = \frac{k}{n_i}$ (full details are given in \cite{rainbowhittingtime}).
\begin{corollary}
\label{binomialbound}
Let $(n_i)_{i \in \N}$ be a sequence of integers, and let $(p_i)_{i \in \N}$ be a sequence of probabilities. Let $X_i \sim \Bin (n_i, p_i)$, and let $k \in \N$ be constant. Suppose that $\mu_i := n_i p_i \rightarrow \infty$ as $i \rightarrow \infty$. Then
\[
\Pb(X_i \leq k) = O(\mu_i^k e^{-\mu_i}).
\]
\end{corollary}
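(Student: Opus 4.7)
The plan is to apply Lemma \ref{chernoffbound}(ii) with the choice $x_i = k/n_i$ suggested in the statement of the corollary. Since $k$ is a fixed constant and $\mu_i = n_i p_i \to \infty$ forces $n_i \to \infty$ (because $p_i \le 1$), for all sufficiently large $i$ we have $x_i = k/n_i \in (0,1)$ and $x_i \le p_i$ (the latter being equivalent to $k \le \mu_i$). The finitely many exceptional indices where this fails can be absorbed into the implicit $O(\cdot)$ constant, so I only need to bound the tail for large $i$.

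For such $i$, Lemma \ref{chernoffbound}(ii) gives
\[
\Pb(X_i \le k) = \Pb(X_i \le n_i x_i) \le \left(\frac{p_i}{x_i}\right)^{x_i n_i}\left(\frac{1-p_i}{1-x_i}\right)^{(1-x_i) n_i}.
\]
The first factor simplifies at once: $(p_i/x_i)^{x_i n_i} = (p_i n_i / k)^k = \mu_i^k / k^k$, which already accounts for the $\mu_i^k$ on the right-hand side of the claim. The exponential decay must therefore come from the second factor, which I would split as $(1-p_i)^{n_i - k} \cdot (1 - k/n_i)^{-(n_i - k)}$. Using $1 - p_i \le e^{-p_i}$ yields $(1-p_i)^{n_i - k} \le e^{-p_i(n_i - k)} = e^{-\mu_i + k p_i} \le e^{k} \cdot e^{-\mu_i}$, since $p_i \le 1$ and $k$ is constant. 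Meanwhile $(1 - k/n_i)^{-(n_i - k)} \to e^{k}$ as $n_i \to \infty$, so it is bounded by a constant. Multiplying the three pieces together gives $\Pb(X_i \le k) = O(\mu_i^k e^{-\mu_i})$, as claimed.

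There is no real obstacle in this argument; it is essentially a calculation. The only technicalities worth flagging are the verification of the admissibility hypothesis $x_i \le p_i$ for Lemma \ref{chernoffbound}(ii), which is immediate from $\mu_i \to \infty$, and the handling of the small-$i$ range where this fails, which is absorbed into the implicit constant.
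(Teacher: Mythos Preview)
Your argument is correct and is exactly the approach the paper indicates: it applies Lemma~\ref{chernoffbound}(ii) with $x_i = k/n_i$ and then simplifies the resulting bound, which the paper leaves as a reference to \cite{rainbowhittingtime}. Your handling of the admissibility condition $x_i \le p_i$ and of the second factor via $1-p_i \le e^{-p_i}$ and $(1-k/n_i)^{-(n_i-k)} \to e^k$ is precisely what is needed.
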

\qed

We will also need the following easy observation on the probability of a union of events.

\begin{lemma}
\label{eventbounds}
 Let $A_i$, $i =1, \dots, k$, be events in a probability space $(\Omega, \mc{F}, \Pb)$. Then
\[
 \sum_{i} \Pb\left( A_i\right)  - \sum_{i} \sum_{ j < i} \Pb (A_i \cap A_j) \le \Pb\left( \bigcup_i A_i\right) \leq \sum_{i} \Pb\left( A_i\right) .
\]
\end{lemma}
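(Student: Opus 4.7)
The upper bound is the standard union bound, obtainable by a routine induction on $k$ from finite subadditivity of $\Pr$, so the real content is the lower bound. For that, the plan is to prove the classical degree-two Bonferroni inequality by reducing it to a deterministic pointwise check on indicator random variables and then taking expectations.

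Concretely, I would aim to establish that pointwise on $\Omega$,
$$\sum_i \mathbf{1}_{A_i} - \sum_i \sum_{j < i} \mathbf{1}_{A_i \cap A_j} \le \mathbf{1}_{\bigcup_i A_i},$$
and then apply $\E$ to both sides. To verify this pointwise inequality, fix $\omega \in \Omega$ and let $m = m(\omega) := |\{i : \omega \in A_i\}|$. The left-hand side then equals $m - \binom{m}{2}$, while the right-hand side equals $1$ if $m \ge 1$ and $0$ if $m = 0$. The case $m = 0$ is trivial, and for $m \ge 1$ the inequality reduces to $m - \binom{m}{2} \le 1$, i.e.\ to $(m-1)(m-2) \ge 0$, which is immediate for every non-negative integer~$m$. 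Taking expectations, using linearity together with $\E \mathbf{1}_{A_i} = \Pr(A_i)$ and $\E \mathbf{1}_{A_i \cap A_j} = \Pr(A_i \cap A_j)$, then yields the lower bound.

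I do not expect any genuine obstacle: the lemma is simply the truncated inclusion--exclusion estimate being recorded here for later reference, and the only choice to make is to work at the level of indicator functions rather than manipulating probabilities directly, which is what makes the argument a one-line arithmetic check instead of an induction on $k$.
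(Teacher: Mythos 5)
Your proof is correct. The paper itself gives no proof of this lemma---it is stated with an immediate \qed, being the standard degree-two Bonferroni inequality---so there is nothing to compare against; your indicator-function argument (reducing to the arithmetic fact $m - \binom{m}{2} \le 1$ for non-negative integers $m$ and taking expectations) is a clean and entirely standard way to supply the missing justification.
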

\qed

\subsection{General observations}
\label{generalobservations}

For the rest of the paper, define $p=p(n)$ as in Theorem \ref{upperboundtheorem}, let $G \sim \Gnp$ and colour the edges of $G$ independently and uniformly at random using $r$ colours.

\begin{lemma}
\label{edgesofeachcolour}
Let $\delta >0$ be constant, let $W \subset V$ be a set of vertices with $|W| \sim n$, and let $v \in V$. Then for every colour, with probability $1-o\left(\exp\left(-n^{1/r}\right)\right)$, there are at least $\frac{1 - \delta}{r} np$ and at most $\frac{1 + \delta}{r} np$ edges between $v$ and $W$ of the given colour.
\end{lemma}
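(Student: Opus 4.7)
The plan is a direct Chernoff estimate on a binomial. Fix a colour; let $X$ count edges of that colour between $v$ and $W$. Since each of the $m := |W \setminus \{v\}|$ potential edges is present independently with probability $p$ and, conditional on being present, receives the given colour with probability $1/r$ independently, we have $X \sim \Bin(m, p/r)$. Setting $\mu := m p / r$, the hypothesis $|W| \sim n$ gives $\mu = (1+o(1)) np/r$, and the definition of $p$ from Theorem \ref{upperboundtheorem} gives
\[
\mu = \Theta\bigl(n^{1/r}(\log n)^{1/r}\bigr).
\]

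Next I would apply Corollary \ref{chernoffcorollary} to $X$ with a slightly smaller constant $\delta' := \delta/2$, yielding
\[
\Pb\bigl(|X - \mu| \ge \delta' \mu\bigr) \le 2 \exp\bigl(-(\delta')^2 \mu / 3\bigr) = \exp\bigl(-\Omega(n^{1/r}(\log n)^{1/r})\bigr).
\]
Since $(\log n)^{1/r} \to \infty$, this bound is $o(\exp(-n^{1/r}))$, which is the tail probability required by the lemma.

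Finally, on the complementary event $|X-\mu|<\delta'\mu$, I would translate the bounds back to $np/r$: because $\mu = (1+o(1))np/r$ and $\delta' = \delta/2 < \delta$, for $n$ sufficiently large
\[
(1-\delta)\frac{np}{r} \le (1-\delta')\mu \le X \le (1+\delta')\mu \le (1+\delta)\frac{np}{r},
\]
which is exactly the conclusion. No step here is a real obstacle; the only thing to be slightly careful about is passing from multiplicative deviations around $\mu$ to the stated deviations around $np/r$, which is why I take $\delta' < \delta$ rather than $\delta' = \delta$.
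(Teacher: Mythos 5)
Your proof is correct and follows essentially the same route as the paper: identify the count as a $\Bin(|W|,p/r)$ (the paper also notes it could be $|W|-1$ if $v\in W$), compute that the mean is $\Theta(n^{1/r}(\log n)^{1/r})$, and apply Corollary \ref{chernoffcorollary}. Your extra step of taking $\delta'=\delta/2$ to absorb the $|W|\sim n$ slack is a sensible bit of bookkeeping that the paper leaves implicit.
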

\begin{proof}
The number of such edges is distributed binomially with parameters $|W|$ (or $|W|-1$ if $v \in W$) and $p/r$. Since $|W|p/r \sim \left((1+\epsilon)C n \log n\right)^{1/r}/r$, the probability that there are fewer than $\frac{1 - \delta}{r} np$ or more than $\frac{1 + \delta}{r} np$ such edges is $o\left(\exp\left(-n^{1/r}\right)\right)$ by Corollary \ref{chernoffcorollary}.
\end{proof}

For $k \in \N$, we call a path of length $k$ in $G$ a \emph{$k$-path}, so a $k$-path is of the form $x_0 x_1 \dots x_k$ where the $x_i$ are distinct vertices. We call a collection of paths in the graph \emph{independent} if no two of them share any inner vertices.

\begin{lemma}
\label{enoughnormalpaths}
There is a constant $c>0$ such that whp every pair of vertices in $G$ is joined by at least $c \log n$ independent $r$-paths.
\end{lemma}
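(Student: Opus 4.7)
Fix a pair of distinct vertices $u, v \in V$, and let $X = X_{u,v}$ denote the number of $r$-paths from $u$ to $v$ in $G$. My plan is to show that $X \ge \alpha \log n$ for some constant $\alpha > 0$ with probability $1 - o(n^{-2})$ per pair (allowing a union bound over all $\binom{n}{2}$ pairs), and then to extract from this pool $\Omega(\log n)$ internally disjoint paths. A direct first-moment count gives $\mu := \E X = (n-2)(n-3)\cdots(n-r) p^r \sim K \log n$, where $K := C(1+\epsilon)$. Since $C = r^{r-2}/(r-2)! \ge 3$ for every $r \ge 3$, we have $K > 3$, which will be crucial for the union bound.

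For the lower tail of $X$, I will apply Janson's inequality to the indicator events that each $r$-path is present. The key quantity is $\Delta^* = \sum_{\pi_1 \ne \pi_2} \Pr[\pi_1 \cup \pi_2 \subseteq G]$, summed over pairs of distinct $r$-paths sharing at least one edge. Since every edge lies in at most $O(n^{r-2})$ $r$-paths from $u$ to $v$ and the total number of (edge, path) incidences is $r(n-2)(n-3)\cdots(n-r)$, the number of such pairs is $O(n^{2r-3})$, each contributing $\le p^{2r-1}$; thus $\Delta^* = O(n^{2r-3}p^{2r-1}) = O(\mu^{2-1/r}/n^{1/r}) = o(1)$. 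Janson's lower-tail bound then gives
\[
\Pr[X \le (1-\delta)\mu] \le \exp\!\left(-\varphi(\delta)\,\mu/(1+2\Delta^*/\mu)\right) = n^{-\varphi(\delta) K(1+o(1))},
\]
where $\varphi(\delta)=(1-\delta)\log(1-\delta)+\delta \to 1$ as $\delta\to 1$. Choosing $\delta$ close enough to $1$ (so $\alpha := (1-\delta)K$ is small) makes the exponent exceed $2$, and a union bound yields the desired high-probability statement for all pairs.

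Next I will show that no vertex supports too many such paths. For every ordered triple $(u,v,w)$ of distinct vertices, let $\Delta_w^{u,v}$ denote the number of $r$-paths from $u$ to $v$ having $w$ as an internal vertex; its expectation is $\Theta(\log n/n)$. Counting $4$-tuples of such paths, where generically the paths pairwise overlap only at $w$, gives $\E\binom{\Delta_w^{u,v}}{4}=O((\log n/n)^4)$, with non-generic overlap patterns contributing lower-order terms. Summing over the $O(n^3)$ triples bounds the expected total by $O(n^{-1}(\log n)^4)=o(1)$, so by Markov's inequality whp $\Delta_w^{u,v}\le 3$ simultaneously for all triples $(u,v,w)$. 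Given both events, a greedy procedure for each pair $(u,v)$ --- repeatedly picking an $r$-path and deleting all others sharing an internal vertex --- removes at most $3(r-1)$ paths per step, and so yields at least $\alpha\log n/(3(r-1))$ internally disjoint $r$-paths, proving the lemma with $c=\alpha/(3(r-1))$.

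I expect the main technical obstacle to be rigorously verifying $\Delta^*=o(1)$ for Janson: although the crude bound above suffices, a careful treatment must separately account for pairs of paths sharing several edges (in one or more contiguous segments) and for pairs sharing internal vertices without shared edges. A similar but simpler case analysis is needed for bounding $\E\binom{\Delta_w^{u,v}}{4}$.
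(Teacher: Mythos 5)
Your approach is genuinely different from the paper's: the paper directly constructs independent paths by a depth-$(r-1)$ breadth-first exploration from $v$ (revealing a tree, grouping leaves by their first-step ancestor, and applying Chernoff bounds at each layer), so the independence of the paths is built in by fiat. You instead count \emph{all} $r$-paths via Janson's inequality and then try to extract independent ones by showing no inner vertex lies on too many of them.

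The Janson half is reasonable in spirit (and you correctly flag that the estimate $\Delta^*=o(1)$ needs a case analysis over overlap patterns, not just the one-shared-edge count you wrote). The real problem is in the second half. Your claim that the generic 4-tuple (paths meeting only at $u,v,w$) dominates $\E\binom{\Delta_w^{u,v}}{4}$, with ``non-generic overlap patterns contributing lower-order terms,'' is false. Consider the ``fan'' configuration: four $r$-paths $u\,a_1\cdots a_{r-2}\,b_i\,v$, $i=1,\dots,4$, sharing the initial segment $u\,a_1\cdots a_{r-2}$ (with $w$ one of the $a_j$) and branching only at the penultimate step. After fixing $u,v,w$ this has $r+1$ free vertices and $r+6$ edges, so its contribution is $\Theta^*\bigl(n^{r+1}p^{r+6}\bigr)=\Theta^*\bigl(n^{-4+6/r}\bigr)$, which \emph{exceeds} the generic $\Theta^*(n^{-4})$ for every $r\ge 3$. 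Summed over $\Theta(n^3)$ triples this is $\Theta^*(n^{-1+6/r})$, which is not $o(1)$ for any $r\le 6$, and so Markov's inequality does not give ``$\Delta_w^{u,v}\le 3$ for all triples.'' Indeed, that conclusion is simply false: already for $r=3$, the quantity $\Delta_w^{u,v}$ is governed by the number of common neighbours of $w$ and $v$ (or of $u$ and $w$), and while this has mean $np^2=o(1)$, its maximum over all $\Theta(n^2)$ pairs is whp around $6$, so triples with $\Delta_w^{u,v}\ge 4$ do occur.

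The argument is probably salvageable --- one can bound $\Delta_w^{u,v}$ by a larger constant $K=K(r)$ using a sufficiently high moment (roughly the $k$-th moment with $k>(3r-2)/(r-2)$ to kill the fan, and a full case analysis for the other overlap types), after which the greedy extraction still gives $\Omega(\log n)$ independent paths with a worse constant. But as written the step fails, and it is precisely this extraction problem that the paper's BFS-tree construction sidesteps by producing independent paths directly.
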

\begin{proof}
Fix two distinct vertices $v$ and $w$. We shall explore the $(r-1)$-neighbourhood of $v$ step-by-step, and apply Lemma \ref{edgesofeachcolour} at each step with a suitable $\delta >0$ to see that the sets we discover have the right size. Let $\delta>0$ be such that $(1-\delta)(1+\epsilon)^{1/r} = (1+\frac{\epsilon}{2})^{1/r}$. 

We start by considering all edges between $v$ and $W_1=V \setminus \{v,w\}$. By Lemma \ref{edgesofeachcolour}, with probability  $1-o\left(\exp\left(-n^{1/r}\right)\right)$ there are between $(1 - \delta) np$ and $(1 + \delta)np$ edges between $v$ and $W_1$.
 Condition on this, and denote by $N_1$ the set of vertices in $ W_1$ which are adjacent to $v$.

Next, let $W_2 = W_1 \setminus N_1$, and consider the edges between $N_1$ and $W_2$. Note that by our condition on the size of $N_1$, $|W_2| \sim n$. Furthermore, the edges between  $N_1$ and $W_2$ are disjoint from and therefore independent of the edges we have considered so far. We go through the vertices $z$ in $N_1$ one after the other, revealing the edges present. However, we disregard any edges to vertices which are adjacent to another vertex in $N_1$ which was considered earlier, so that the edges revealed form a tree. Applying Lemma~\ref{edgesofeachcolour} at each step, we can see that with probability $1-|N_1|o\left(\exp\left(-n^{1/r}\right)\right) =1 - o\left(\exp\left(-n^{1/2r}\right)\right)$, at each step there are between $(1 - \delta) np$ and $(1 + \delta) np$ edges.

Denote by $N_2$ the set of vertices in $W_2$ adjacent to a vertex in $N_1$, and let $W_3 = W_2 \setminus N_2$. We now proceed in the same way and explore the neighbours in $W_3$ of all vertices in $N_2$ disjointly, conditional on the neighbourhoods so far having the right sizes for all vertices
 according to Lemma \ref{edgesofeachcolour}.

We continue in this way until we have explored the entire $(r-1)$-neighbourhood $N_{r-1}$ of the vertex $v$ in $W_1$. Note that if all neighbourhoods have the right size, in total $O\left((np)^{r-1}\right)=o(n)$ vertices are revealed, so we can apply Lemma \ref{edgesofeachcolour} at each step. 
With probability $1-o(\exp \left(-n^{1/2r}\right)) $, we now have a tree with at least $\left( (1-\delta)np \right)^{r-1}=\left((1+\frac{\epsilon}{2})C n \log n\right)^{(r-1)/r}$ leaves. We can group the leaves together depending on which of the edges incident with $v$ their path to $v$ contains (i.e., which vertex in $N_1$ they originate from) --- each group has size at least  $\left((1+\frac{\epsilon}{2})C n \log n\right)^{(r-2)/r}$, and there are at least  $\left(\left(1+\frac{\epsilon}{2}\right)C n \log n\right)^{1/r}$ groups. The edges between $w$ and the leaves of the tree are independent from the edges that have been explored before. The probability that $w$ has a neighbour in a given vertex group is therefore at least 
\begin{align*}1-&(1-p)^{\left(\left(1+\frac{\epsilon}{2}\right)C n \log n\right)^{(r-2)/r}} \geq 1-\exp \left(-p\left(\left(1+\frac{\epsilon}{2}\right)C n \log n\right)^{(r-2)/r} \right) \\
&\geq 1-\exp \left(-\left(\left(1+\frac{\epsilon}{2}\right)C \log n\right)^{(r-1)/r} n^{-1/r}\right)\\
&\sim \left(\left(1+\frac{\epsilon}{2}\right)C  \log n\right)^{(r-1)/r}n^{-1/r},
\end{align*}
using the fact that $1-x \le \exp(-x)$ for all $x \in \R$ and that $1-\exp(-x) \sim x$ as $x \rightarrow 0$. These events are independent for the different vertex groups, so the number of groups with at least one edge to $w$ is distributed binomially. If we pick one edge from each such  vertex group, this gives independent paths from $v$ to $w$ by construction. Since there are at least  $\left((1+\frac{\epsilon}{2})C n \log n\right)^{1/r}$ vertex groups, the expected number of such paths is at least $(1+\frac{\epsilon}{3})C  \log n$ if $n$ is large enough. Note that $\varphi(x)\rightarrow 1$ as $x \searrow -1$, where $\varphi$ is the function defined in Corollary~\ref{newchernoffcorollary}. Therefore, since $r \geq 3$ and $C=\frac{r^{r-2}}{(r-2)!}>2$, if we pick $c>0$ small enough,  the probability that there are fewer 
than $c \log n$ independent $r$-paths joining $v$ and $w$ is $o(n^{-2})$ by Corollary~\ref{newchernoffcorollary}.\end{proof}

\subsection{The main lemma}
 \label{themainlemma}
Let 
\[L = \left\lceil \frac{17 r}{\epsilon (r-1)}\right\rceil, K=rL, \text{ and } S = 2Lr^2+2.
\]

 Call a pair of vertices \emph{dangerous} if it is joined by at most $K$ independent rainbow $r$-paths in the random colouring. The following lemma will form the main part of the proof.

\begin{lemma}
\label{maindistinct}
For a pair $\{v,w\}$ of vertices, denote by $A_{v,w}$ the event shown in Figure \ref{mainlemmapic}: There are $L$ independent $r$-paths $P_1,\ldots,P_L$ joining $v$ and $w$, and $L$ $r$-paths $Q_1,\ldots,Q_L$ such that, writing $\{x_i,y_i\}$ for
the end vertices of $Q_i$, the following conditions hold:
\begin{enumerate}[i)]
 \item For each $i$, $P_i$ contains an edge $e_i$ that is also on $Q_i$.
 \item The pairs $\{ x_i, y_i  \}$, $i=1, \ldots, L$, and $\{v,w\}$ are distinct.
 \item All pairs $\{ x_i, y_i  \}$, $i=1, \ldots, L$, are dangerous.
\end{enumerate}
Then whp $A_{v,w}$ does not hold for any pair $\{v,w\}$ of vertices.
\end{lemma}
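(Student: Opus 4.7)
The plan is a first-moment argument: sum over all potential configurations and show that the expected number of witnesses of $A_{v,w}$, across all pairs $\{v,w\}$, is $o(1)$.

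I would parametrize a configuration by choosing the endpoints $\{v,w\}$ (at most $n^2$ choices), the $L(r-1)$ inner vertices of $P_1,\ldots,P_L$ (at most $n^{L(r-1)}$ choices, since the $P_i$ are vertex-disjoint on their interiors), and then for each $Q_i$ the shared edge $e_i\in P_i$, the position of $e_i$ along $Q_i$, and the remaining $r-1$ vertices of $Q_i$ (contributing $O(n^{r-1})$ per $Q_i$). This gives $O(n^{2+2L(r-1)})$ configurations in total. For a fixed configuration, the probability that all required edges are present in $G$ is at most $p^{L(2r-1)}$ in the dominant case when all $L(2r-1)$ edges of the $P_i$ and $Q_i$ are distinct; configurations with edge coincidences save a factor of $1/p$ per coincidence but lose a factor of $1/n$ in the vertex count, and since $np\to\infty$ these are strictly lower order. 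A direct calculation then yields $n^{2+2L(r-1)}\,p^{L(2r-1)}=(\log n)^{O(1)}\,n^{2+L(r-1)/r}$.

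The remaining factor is the probability that all $L$ pairs $\{x_i,y_i\}$ are dangerous, and this is where the specific value of $C$ is exploited. Inspecting the proof of Lemma \ref{enoughnormalpaths}, with probability $1-o(n^{-2})$ any fixed pair is joined by at least $(1+\epsilon/3)C\log n$ independent $r$-paths, and each such path is rainbow independently with probability $r!/r^r$ because they are edge-disjoint. Using the identity $C\cdot r!/r^r=1-1/r$, the expected number of rainbow $r$-paths is at least $(1+\epsilon/3)(1-1/r)\log n$, and Corollary \ref{binomialbound} bounds the probability of at most $K$ of them being rainbow by $(\log n)^{O(1)}\,n^{-(1+\epsilon/3)(1-1/r)}$. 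To combine these bounds across the $L$ pairs, I would expose the configuration and the colours of its edges first, and then for each $\{x_i,y_i\}$ in turn build a witness family of independent $r$-paths in the residual graph that is vertex-disjoint from the witnesses used for all earlier pairs. Since each witness family has only $O(\log n)$ paths on $O(r\log n)$ vertices and $L$ is constant, there is still room to run the exploration of Lemma \ref{enoughnormalpaths} in the residual graph, so the dangerousness events become conditionally independent and multiply to $(\log n)^{O(1)}\,n^{-L(1+\epsilon/3)(1-1/r)}$.

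Putting everything together, the expected number of bad configurations over all pairs $\{v,w\}$ is at most
\[
(\log n)^{O(1)}\,n^{2+L(r-1)/r-L(1+\epsilon/3)(r-1)/r}=(\log n)^{O(1)}\,n^{2-L(r-1)\epsilon/(3r)}.
\]
Since $L\ge 17r/(\epsilon(r-1))$ forces $L(r-1)\epsilon/(3r)\ge 17/3>2$, the exponent is at most $-11/3$, which beats the polylogarithmic factor, and the lemma follows from Markov's inequality. I expect the main obstacle to be making the dangerousness decoupling step rigorous: the existence of the $P_i$ and $Q_i$, the colours of their edges, and the dangerousness of each $\{x_i,y_i\}$ are all events on the same random graph and the same random colouring, so one needs a careful order of exposure (configuration first, then its edge colours, then fresh vertex-disjoint witness trees for each dangerous pair in turn) in order to legitimately obtain the product bound above.
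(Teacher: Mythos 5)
Your high-level plan matches the paper's: a first-moment count over configurations, multiplied by a decoupled bound on the probability that all $L$ pairs $\{x_i,y_i\}$ are simultaneously dangerous, and your exponent bookkeeping is essentially right. But the specific decoupling scheme you propose has a genuine gap. Condition (ii) of $A_{v,w}$ only requires the pairs $\{x_i,y_i\}$ to be distinct as sets, not vertex-disjoint, so we may well have, say, $x_1=x_2$. Your plan to build, for each pair in turn, a witness exploration tree rooted at $x_i$ that is vertex-disjoint from the ones used for earlier pairs then cannot be carried out: a depth-$(r-1)$ tree rooted at $x_1$ with the $\Theta(np)$ first-level groups you need to get the binomial mean up to $(1+\Theta(\epsilon))(1-1/r)\log n$ consumes essentially all of $x_1$'s $\sim np$ neighbours in the residual graph, leaving nothing for a second disjoint tree rooted at $x_2=x_1$. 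More generally, several pairs can compete for the degree at a shared endpoint, so ``one disjoint tree per pair'' fails, and if you split the groups between the competing pairs the resulting mean drops by a constant factor and the exponent no longer beats $(r-1)/r$.

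The paper avoids this by building one rainbow exploration tree of depth $s=\lfloor (r-1)/2\rfloor$ per \emph{distinct vertex} $z\in D=\{x_1,y_1,\dots,x_L,y_L\}$, shared between all pairs that use $z$, establishing the disjointness properties of the groups $\mathcal{R}_j(z)$, and then defining the events $E_i$ in terms of the rainbow-completing edges (for $r$ odd) or $2$-paths through a fresh middle vertex (for $r$ even) between $\mathcal{R}(x_i)$ and $\mathcal{R}(y_i)$. Because the pairs $\{x_i,y_i\}$ are distinct, these connecting-edge sets are disjoint across $i$ even when endpoints are shared, and that disjointness is what actually delivers the product bound you want; it is a ``per distinct vertex'' construction, not a ``per pair'' one. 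A few smaller remarks: your ``coincidences save $1/p$ but cost $1/n$'' heuristic for the configuration count is made rigorous in the paper by the connectivity argument giving $l\ge k+L$ and $k\le 2(r-1)L$, hence $n^kp^l\le (np)^{2(r-1)L}p^L$ uniformly over degenerate configurations; with probability $1-o(n^{-2})$ Lemma \ref{enoughnormalpaths} only delivers $c\log n$ independent $r$-paths for a small constant $c$, and the quantity $(1+\epsilon/3)C\log n$ is really the binomial mean one plugs into Corollary \ref{binomialbound} after conditioning on the exploration tree having the right size; and the $r$ even case genuinely needs the separate treatment via middle vertices, including the events $\tilde B^z_j$ controlling multiplicity within a group, which your sketch does not anticipate.
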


The idea of the proof is the following. For one pair $\{ x_i, y_i \}$ as in the lemma, the expected number of rainbow $r$-paths joining $x_i$ and $y_i$ is roughly $\frac{r!}{r^r} n^{r-1} p^r = \frac{r!}{r^r} C (1+\epsilon) \log n = \frac{r-1}{r}(1+\epsilon) \log n$. Since the rainbow paths behave roughly binomially, the probability that $\{ x_i, y_i \}$ is dangerous is about $n^{-\frac{r-1}{r}(1+\epsilon)}(\log n)^K$ by Corollary \ref{binomialbound}.

Therefore, given $v$ and $w$, the probability that there is one path $P_i$ containing an edge $e_i$ which also lies on an $r$-path joining a dangerous pair $\{ x_i, y_i \}$ is about
\[
O^* \left( n^{2r-2} p^{2r-1} n^{-\frac{r-1}{r}(1+\epsilon)} \right)= O^*(n^{-\epsilon \frac{r-1}{r}} ) .
\]
Therefore, if we can show that these events do not depend on each other too much for different $P_i$, then we would expect that the overall probability that there are $L$ such paths is about $O^*\left(n^{-L\epsilon \frac{r-1}{r}}\right)$. If $L$ is chosen large enough, this will then be $o(n^{-2})$, completing the proof of the lemma.

A formal proof of this idea requires some care.

\begin{figure}[tb]
\begin{center}
\begin{overpic}[width=0.9\textwidth]{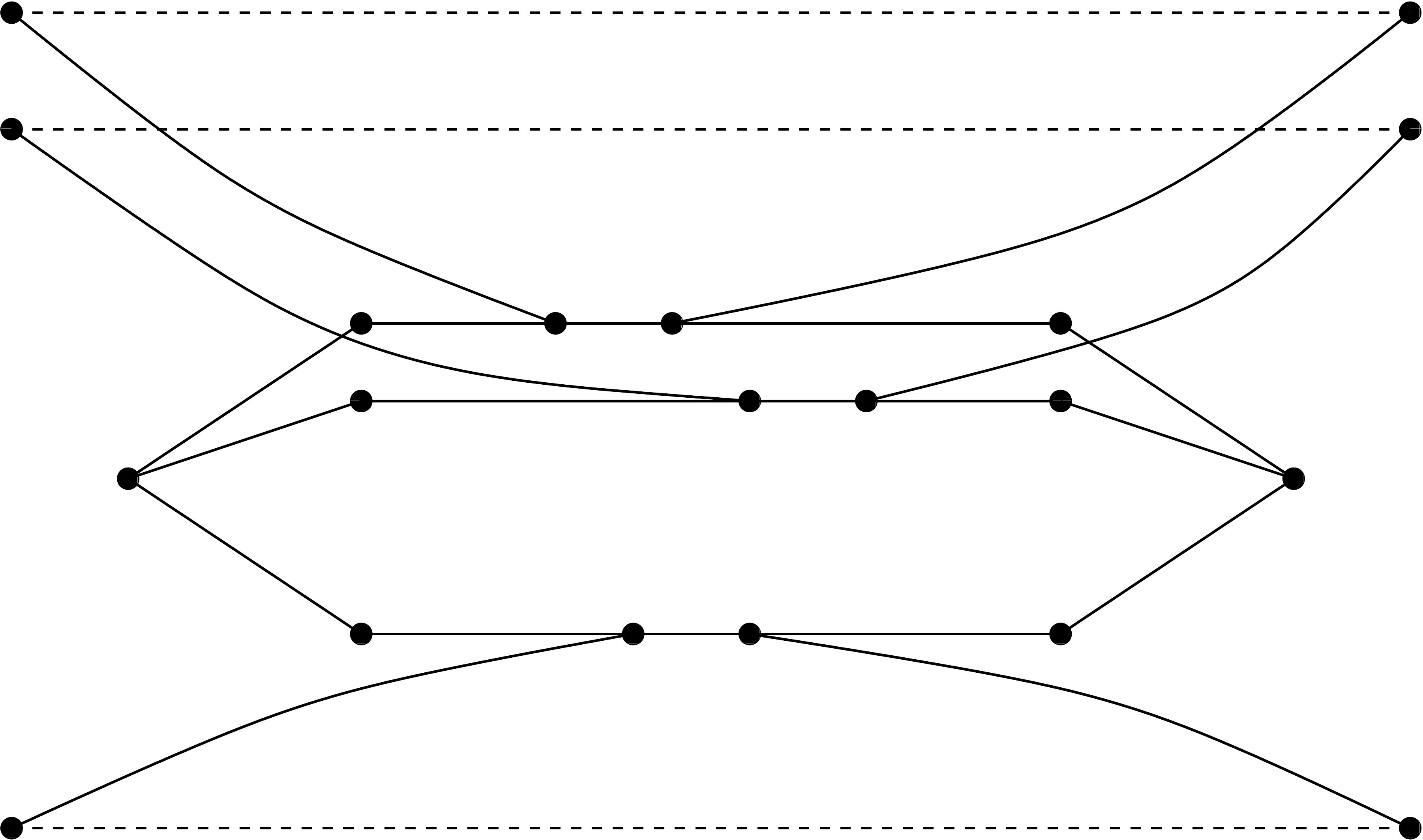}
\put(5.5,24.7){$v$}
\put(92.8,24.7){$w$}
\put(42,37.5){$e_1$}
\put(55.5,32){$e_2$}
\put(47,15.5){$e_L$}
\put(-3.7,59){$x_1$}
\put(-3.7,50.75){$x_2$}
\put(-3.7,1.5){$x_L$}
\put(100.5,59){$y_1$}
\put(100.5,50.75){$y_2$}
\put(100.5,1.5){$y_L$}
\put(14.8,32.2){$P_1$}
\put(16.5,25){$P_2$}
\put(14.8,17.5){$P_L$}
\put(18,46){$Q_1$}
\put(14.5,41){$Q_2$}
\put(14.8,4.6){$Q_L$}
\put(25.2,21){\begin{sideways}$\dots$\end{sideways}}
\put(74.3,21){\begin{sideways}$\dots$\end{sideways}}
\end{overpic}
\caption{The event $A_{v,w}$. Dashed lines show dangerous pairs. The paths $P_i$ only meet at $v$ and $w$, while the paths $Q_i$ may share vertices with each other and with the paths $P_i$. The pairs $\{x_i, y_i\}$ are distinct, but not necessarily disjoint. \label{mainlemmapic}}
\end{center}
\end{figure}

\begin{proof}[Proof of Lemma \ref{maindistinct}]
Fix distinct vertices $v$ and $w$. Consider a possible configuration of vertices and edges for the paths $P_i$, $Q_i$, edges $e_i$ and pairs $\{x_i, y_i\}$ as in conditions (i) and (ii) of $A_{v,w}$. Denote by $k$ the number of vertices in the configuration other than $v$ and $w$, and let $l$ be the number of edges in the configuration. Then $k \leq 2(r-1)L$, as the configuration consists of $L$ $r$-paths $P_i$ with endpoints $v$ and $w$, and $L$ $r$-paths $Q_i$ which each share at least two vertices with a path $P_i$.

Note that the configuration is connected, and if we remove one edge on all but one path $P_i$, it is still connected as there is still one $v$-$w$ path left. Since a connected graph with $m$ vertices has at least $m-1$ edges, it follows that $l-(L-1) \geq (k+2)-1$, so $l \geq k+L$. Therefore,
\begin{equation}
\label{configurationcontribution}
 n^k p^l \leq (np)^k p ^L \leq (np)^{2(r-1)L}p ^L = n^{2(r-1)L} p^{(2r-1)L}.
\end{equation}
Now condition on a specific such configuration being present in $G$. Let $W$ denote the set of vertices involved in the configuration, including $v$ and $w$,  and let $V' = V \setminus W$. Then $|W|=k+2\leq2Lr$, and  $|V'| \sim n$. The edges between $W$ and $V'$ and within $V'$ are disjoint from and therefore independent from the edges involved in the configuration.

\begin{figure}[htb]
\begin{center}
\begin{overpic}[width=0.85\textwidth]{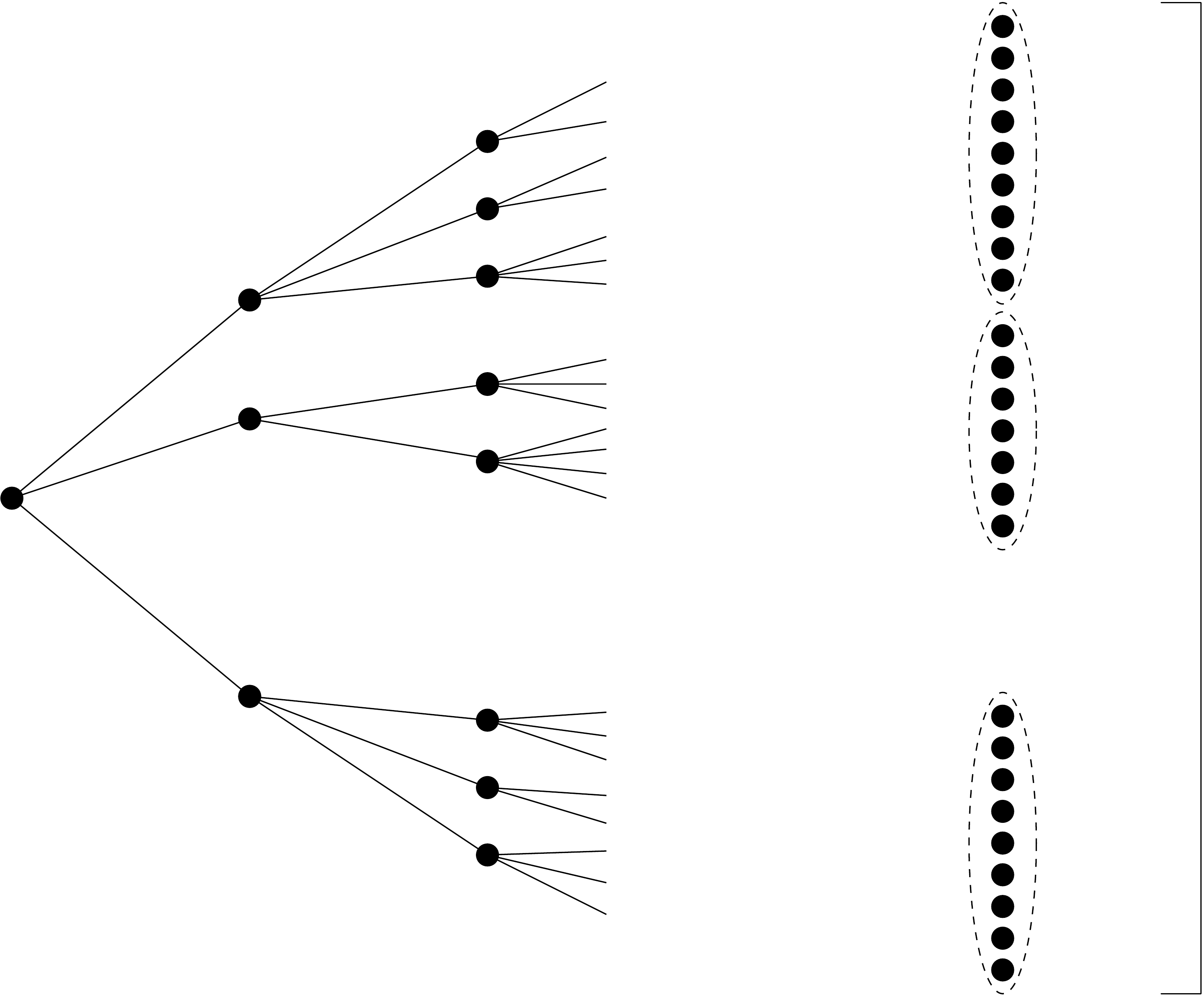}
\put(-3,40.6){$z$}
\put(20.5,34.5){\begin{sideways}$\dots$\end{sideways}}
\put(83,29.3){\begin{sideways}$\dots$\end{sideways}}
\put(63.5,77){$\dots$}
\put(63.5,69){$\dots$}
\put(63.5,61){$\dots$}
\put(63.5,51){$\dots$}
\put(63.5,43){$\dots$}
\put(63.5,21){$\dots$}
\put(63.5,13){$\dots$}
\put(63.5,5){$\dots$}
\put(101,40.6){$\mc{R}(z)$}
\put(88,69){$\mc{R}_1(z)$}
\put(88,46){$\mc{R}_2(z)$}
\put(88,11.5){$\mc{R}_{j_z}(z)$}
\end{overpic}
\caption{The tree of depth $s$ obtained by the exploration of the rainbow $s$-neighbourhood of $z$. The paths from the root $z$ to the leaves are rainbow paths. We have one such tree for each $z \in D$; these trees are disjoint.\label{rainbownbhd}}
\end{center}
\end{figure}

Let $s= \left\lfloor \frac{r-1}{2} \right\rfloor$, and let $D$ be the set of all vertices $x_i$ and $y_i$ from the configuration.  We now explore the $s$-neighbourhoods $\Gamma^s(z)$ of the vertices $z \in D$ within $V'$. We want to find disjoint subsets of $\Gamma^s(z)$ such that all elements are joined to $z$ by a rainbow $s$-path and all such paths are independent, except if they come from the same neighbour of $z$. We do this as in the proof of Lemma \ref{enoughnormalpaths} --- first explore the neighbours of $z_1 \in D$ in $V'$, then explore their neighbours in $V'$ and so on, then proceed with the next vertex $z_2 \in D$, and so on. As before, at each step, we disregard edges to vertices that have been explored already. Unlike in the proof of Lemma \ref{enoughnormalpaths}, at each step we only check for new neighbours joined by edges with colours not appearing on the path from $z$ to the current vertex. We group together vertices that come from the 
same edge incident with some $z \in D$.  This gives disjoint subsets $\mathcal{R}_j(z)$ of $V'$ for every $z \in D$, $1\leq j \leq j_z$, where the  
$\mathcal{R}_j(z)$ are the vertex groups which come from the same neighbour of $z$, as shown in Figure~\ref{rainbownbhd}. Let $\mc{R}(z) = \bigcup_{1 \leq j \leq j_z} \mc{R}_j(z)$.  Then by definition, the following properties hold:
\begin{enumerate}[i)]
\item For every $z_1$, $z_2$, $1\leq j_1 \leq j_{z_1}$ and $1\leq j_2 \leq j_{z_2}$ , if $z_1 \neq z_2$ or if $j_1 \neq j_2$ then the sets $\mathcal{R}_{j_1}(z_1) $ and $\mathcal{R}_{j_2}(z_2) $ are disjoint.
\label{enum1}
\item For every $z\in D$, every vertex $z' \in \mathcal{R}(z)$ is joined to $z$ by a rainbow $s$-path $P_{z'}$ with all inner vertices in $V'$.
\label{enum2}
\item For every $z_1$, $z_2$, $j_1$, $j_2$ and $z'_1 \in  \mathcal{R}_{j_1}(z_1)$, $z'_2 \in  \mathcal{R}_{j_2}(z_2)$, if $z_1 \neq z_2$ or if $j_1 \neq j_2$, then the paths $P_{z_1'}$ and $P_{z_2'}$ do not share any inner vertices.
\label{enum3}
\end{enumerate}
Applying Lemma \ref{edgesofeachcolour} at each step of our exploration with $\delta > 0$ such 
that $(1-\delta) (1+\epsilon)^{1/r} \ge \left(1+\frac{\epsilon}{2}\right)^{1/r}$ and $(1+\delta) (1+\epsilon)^{1/r}\le (1+2\epsilon)^{1/r}$, we see that with probability $1-o(n^{-2Lr})$, the following additional properties hold:
\begin{enumerate}[i)]
\setcounter{enumi}{3}
\item For all $z\in D$, $\left(  C \left(1+\frac{\epsilon}{2}\right) n \log n\right)^{1/r} \leq j_z \leq  \left(  C \left(1+2 \epsilon \right) n \log n\right)^{1/r}$.
\label{enum4}
\item For all $z \in D$ and $1 \le j \le j_z$, $\left|\mc{R}_j (z) \right| = O^* \left( n^{\frac{s-1}{r}}\right)$.
\label{enum4.5}
 \item For every subset $S$ of the $r$ available colours such that $|S|=s$ and for all $z\in D$ there are at least
\[
\frac{s!}{r^s} \left(  C \left(1+\frac{\epsilon}{2}\right) n \log n\right)^{s/r}
\]
and at most
\[
\frac{s!}{r^s} \left(  C \left(1+2 \epsilon \right) n \log n\right)^{s/r}
\]
vertices $z' \in \mathcal{R}(z)$ such that the colours appearing on $P_{z'}$ are exactly the colours in $S$.
\label{enum5}
\end{enumerate}

Assume (\ref{enum1}) -- (\ref{enum5}) from now on. Then $|\mc{R}|= O((n \log n)^{s/r}) = o(n)$, so $|V' \setminus \mc{R}| \sim n$.

\begin{enumerate}[]
 \item \textbf{Case 1: $r$ is odd.}
In this case $s = \left\lfloor \frac{r-1}{2}\right \rfloor = \frac{r-1}{2}$. For every subset $S$ of colours of size $s$, there are ${r-s \choose s} = s+1$ sets of colours  of size $s$ disjoint from $S$. Therefore, for every vertex $u_1$ in some set $\mathcal{R}(x_i)$ there are at least
\[
(s+1) \frac{s!}{r^s} \left(  C \left(1+\frac{\epsilon}{2}\right) n \log n\right)^{s/r}
\]
vertices $u_2$ in $\mathcal{R}(y_i)$ such that an edge $u_1 u_2$ of the correct colour would complete a rainbow $r$-path from $x_i$ to $y_i$. 

Therefore, there are at least 
\[
 {r \choose s} (s+1) \frac{(s!)^2}{r^{2s}} \left(  C \left(1+\frac{\epsilon}{2}\right) n \log n\right)^{2s/r} = \frac{r!}{r^{r-1}} \left(  C \left(1+\frac{\epsilon}{2}\right) n \log n\right)^{(r-1)/r}
\]
potential edges between $\mathcal{R}(x_i)$ and $\mathcal{R}(y_i)$ such that each one would complete a rainbow path from $x_i$ to $y_i$. 
Each of these edges is present and has the correct colour for a rainbow path with probability $\frac{1}{r}p$, independently from the edges that have been revealed so far.
Therefore, the number of such edges is distributed binomially with mean at least 
\[
\frac{r!}{r^r} C \left(1+\frac{\epsilon}{2}\right) \log n = \frac{r-1}{r} \left(1+\frac{\epsilon}{2}\right) \log n.
\]
If we denote by $E_i$ the event that there are at most $2S K$ edges of the correct colour between $\mathcal{R}(x_i)$ and $\mathcal{R}(y_i)$ to complete a rainbow path between $x_i$ and $y_i$, then by Corollary \ref{binomialbound},
\[
\Pb (E_i) = O^* \left( n^{- \left(1+\frac{\epsilon}{2}\right)\frac{r-1}{r}} 
\right).
\]
The events $E_i$ are independent for different pairs $\{x_i, y_i \}$ since the pairs $\{x_i, y_i \}$ are distinct and all sets $\mathcal{R}(x_i)$, $\mathcal{R}(y_i)$ are disjoint. Hence,
\begin{equation}
\label{atmost2SK}
\Pb \left( \bigcap_{1 \leq i \leq L} E_i\right) = O^* \left( n^{- L\left(1+\frac{\epsilon}{2}\right)\frac{r-1}{r}} 
\right)
 =O \left( n^{- L\left(1+\frac{\epsilon}{4}\right)\frac{r-1}{r}} \right) .
\end{equation}
For every $z \in D$ and $1\leq j \leq j_z$, let $B_j^z$ denote the event that there are at least $S$ edges between $\mc{R}_j(z)$ and $\mc{R}\setminus \mc{R}_j(z)$. Then
\[
\Pb(B_j^z) \le \left( |\mathcal{R}_j(z)| |\mathcal{R}| p\right)^S = O^* \left( n^{S (s-1)/r}    n ^{S s/r}  p^S  \right) = O^* \left(n^{-S/r} \right),
\]
as $|\mathcal{R}_j(z)| = O^*\left(n^{(s-1)/r}\right)$ and $|\mathcal{R}| = O^*\left(n ^{s/r}\right)$. Therefore, letting $B =  \bigcup_{(z, j): 1 \leq j \leq j_z} B_j^z$,
\begin{equation}
\label{anySvertex}
\Pb \left(B\right) = O^* \left( n^{1/r} n^{-S/r} \right)= o(n^{-2Lr}), 
\end{equation}
by choice of $S$.

If $B$ does not hold but $E_i$ does for some $1 \le i \le L$, then in particular the pair $\{ x_i, y_i\}$ is not dangerous. This is because we have at least $2S K$ edges of the correct colour between $\mathcal{R}(x_i)$ and $\mathcal{R}(y_i)$ to complete a rainbow path between $x_i$ and $y_i$, but there are at most $S$ such edges from each particular vertex group $\mc{R}_j(x_i)$ or $\mc{R}_j(y_i)$, so we can successively pick $K$ such edges between pairwise distinct vertex groups $\mc{R}_j(x_i)$ or $\mc{R}_j(y_i)$, yielding $K$ independent rainbow paths between $x_i$ and $y_i$ by property (\ref{enum3}).

Therefore, if all $L$ pairs $\{x_i, y_i \}$ are dangerous, then $B \cup \bigcap_{1 \leq i \leq L} E_i$ holds.  Hence, by (\ref{atmost2SK}) and (\ref{anySvertex}), the probability that all $L$ pairs $\{x_i, y_i \}$ are dangerous is bounded by
\[
 O \left( n^{- L(r-1)\left(1+\frac{\epsilon}{4}\right)/r} \right) + o(n^{-2Lr}).
\]

 \item \textbf{Case 2: $r$ is even.}

In this case $s = \left\lfloor \frac{r-1}{2}\right \rfloor = \frac{r}{2}-1$. Let $u\in V' \setminus \mc{R}$. Given a vertex $u_1$  in some $\mathcal{R}(x_i)$, there are at least  
\[{{r-s} \choose s}\frac{s!}{r^s} \left(  C \left(1+\frac{\epsilon}{2}\right) n \log n\right)^{s/r}
\]
and at most
\[{{r-s} \choose s}\frac{s!}{r^s} \left(  C \left(1+2\epsilon \right) n \log n\right)^{s/r}
\]
vertices $u_2$ in $\mathcal{R}(y_i)$ such that adding edges $u_1 u$ and $u u_2$ of appropriate colours would complete a rainbow $r$-path from $x_i$ to $y_i$ via $u_1$, $u$ and $u_2$. Therefore, there are at least 
\begin{equation}
 {r \choose s} {{r-s} \choose s} \frac{s!^2}{r^{2s}} \left(  C \left(1+\frac{\epsilon}{2}\right) n \log n\right)^{2s/r} = \frac{r!}{2r^{r-2}} \left(  C \left(1+\frac{\epsilon}{2}\right) n \log n\right)^{\frac{r-2}{r}}
\label{numberofevents}
\end{equation}
and at most
\begin{equation}
\label{numberofevents2}
\frac{r!}{2r^{r-2}} \left(  C \left(1+2\epsilon\right) n \log n\right)^{\frac{r-2}{r}}
\end{equation}
pairs of vertices $u_1 \in \mathcal{R}(x_i)$, $u_2 \in \mathcal{R}(y_i)$ such that edges $u_1 u$ and $u u_2$ of appropriate colours would complete a rainbow path from $x_i$ to $y_i$. For one such pair $\{u_1, u_2\}$ and $u \in V' \setminus \mc{R}$, denote by $M_u^{u_1,u_2 }$ the event that the edges $u_1u$ and $uu_2$ are present and have one of the two possible colour combinations. Then 
\begin{equation}
\Pb(M_u^{u_1,u_2 }) = \frac{2}{r^2}p^2.
\label{oneevent}
\end{equation}
Moreover, if the events $M_u^{u_1,u_2 }$ and $M_u^{u'_1,u'_2 }$ hold for different pairs $\{u_1, u_2\}$ and $\{u'_1, u'_2\}$, then $u$ is adjacent to three or more distinct vertices from $\{u_1, u_2, u'_1, u'_2\}$, so \begin{equation}
\Pb \left( M_u^{u_1,u_2 } \cap M_u^{u'_1,u'_2 } \right) = O(p^3).                                                                                                                                                                                                                                                                                                                                                                                                                                                                                                                                                                                                                                               
\label{twoevents}                                                                                                                                                                                                                                                                                                                                                                                                                                                                                                                                                                                                                                                \end{equation}
For a vertex  $u \in V' \setminus \mc{R}$, denote by $F_u$ the event that $u$ is the middle vertex of any path as above for any $1 \leq i \le L$. Then by Lemma~\ref{eventbounds} and (\ref{numberofevents}), (\ref{numberofevents2}), (\ref{oneevent}), (\ref{twoevents}), 
\begin{align*}
\Pb(F_u) &\ge  L \frac{r!}{r^r}   \left( C \left(1+\frac{\epsilon}{2}\right) n \log n\right)^{\frac{r-2}{r}} p^2 - O^*\left(n ^{\frac{2(r-2)}{r}}\right) O\left(p^3\right) \\
&\ge L \frac{r-1}{r} \left(1+\frac{\epsilon}{2}\right) n^{-1} \log n  - O^*\left(n^{-1-\frac{1}{r}} \right)\\
&\sim L \frac{r-1}{r} \left(1+\frac{\epsilon}{2}\right) n^{-1} \log n .
\end{align*}
The events $F_u$ are independent for different $u \in V' \setminus \mathcal{R}$. Thus, since $|V' \setminus \mc{R}| \sim n$, the number of events $F_u$ that hold is distributed binomially with mean asymptotically at least 
\[ L \frac{r-1}{r} \left(1+\frac{\epsilon}{2}\right)\log n .
\]
By Corollary \ref{binomialbound}, the probability of the event $F$ that at most $2KLS$ of the events $F_u$ hold is
\begin{equation}
\label{atmost2KLT}
\Pb(F) = O^* \left( n^{-{(1+o(1)) L \frac{r-1}{r} \left(1+\frac{\epsilon}{2}\right)}} \right) 
= O\left(n^{-  L\frac{r-1}{r} \left(1+\frac{\epsilon}{4}\right)}\right)
.
\end{equation}
For every $z \in D$ and $1\leq j \leq j_z$, denote by $\tilde B^z_j$ the event that there are at least $S$ independent $2$-paths from (not necessarily distinct) vertices in $\mathcal{R}_j(z)$ to (not necessarily distinct) vertices in $\mathcal{R}$ with middle vertices in $V' \setminus \mc{R}$. Then, since $\left|\mathcal{R}_j(z)\right| = O^*\left( n ^{(s-1)/r} \right)$ and $\left|\mathcal{R}\right| = O^*\left( n ^{s/r} \right)$,
\begin{equation*}
\Pb(\tilde B_j^z) \le \left( \left|\mathcal{R}_j(z)\right|\left| V\right|\left|\mathcal{R}\right| p^2\right)^S= O^*\left(\left( n ^{\frac{s-1}{r}+1 + \frac{s}{r}} p^2 \right)^S \right) =  O^*\left(n^{-S/r} \right).
\end{equation*}
Therefore, if we let $\tilde B = \bigcup_{(z,j): 1\leq j \leq j_z} \tilde B^z_j$, then
\begin{equation}
 \label{anyTvertex}
\Pb(\tilde B) = O^* \left( n ^{1/r}  n^{-S/r}  \right) = o(n^{-2Lr}),
\end{equation}
by choice of $S$. 

If neither $\tilde B$ nor $F$ holds, then one of the pairs $\{x_i, y_i\}$ is not dangerous. This is because there are more than $2KLS$ vertices $u\in V' \setminus \mc{R}$ which are the middle vertices of rainbow paths joining pairs $\{x_i, y_i\}$, so there is an index $1\leq i_0 \leq L$ such that there are more than $2KS$ vertices $u \in V' \setminus \mc{R}$ which are the middle vertices of rainbow $r$-paths joining the pair $\{x_{i_0}, y_{i_0}\}$ (we can just pick the index with the maximum number of such vertices $u$). If $\tilde B$ does not hold, at most $S$ of those paths pass through any particular vertex group $\mathcal{R}_j(x_{i_0})$ or $\mathcal{R}_j(y_{i_0})$. Hence, we can successively select more than $K$ rainbow $r$-paths joining $\{x_{i_0}, y_{i_0}\}$ which pass through pairwise distinct vertex groups. These rainbow paths are independent by property (\ref{enum3}), so $\{x_{i_0}, y_{i_0}\}$ is not dangerous.

Hence, if all pairs $\{x_i, y_i\}$ are dangerous, then $\tilde B$ or $F$ holds, which by (\ref{atmost2KLT}) and (\ref{anyTvertex}) has probability

\begin{equation*}
O\left(n^{- L\left(1+\frac{\epsilon}{4}\right)\frac{r-1}{r} }\right)+o\left(n^{-2Lr}\right).
\end{equation*}

\end{enumerate}
So in each case, conditional on a configuration of paths $P_i$ and $Q_i$, edges $e_i$ and pairs $\{x_i, y_i\}$ as in conditions (\ref{enum1}) and (\ref{enum2}) of the event $A_{ v,w}$, the probability that all pairs $\{x_i, y_i \}$ are dangerous is at most 
\[
O\left(n^{- L\left(1+\frac{\epsilon}{4}\right)\frac{r-1}{r} }\right)+o\left(n^{-2Lr}\right).
\]
Together with (\ref{configurationcontribution}), it follows that the overall probability of $A_{v,w}$ is at most
\begin{align*}
\lefteqn{O \left(n^{2(r-1)L} p^{(2r-1)L} n^{- L\left(1+\frac{\epsilon}{4}\right)\frac{r-1}{r}} \right) +o(n^{-2L}) = } \\
&= O \left( \left(n^{2r-2 - \left(1+\frac{\epsilon}{4}\right) \frac{r-1}{r}} p^{2r-1}  \right)^L\right) +o(n^{-2})= O \left(n^{- \frac{\epsilon(r-1)}{8r}L }  \right)+o(n^{-2})\\&= o(n^{-2}),
\end{align*}
by choice of $L$.
So whp, there is no such pair $\{v,w\}$.
\end{proof}

\subsection{Completing the proof}
\label{finishingtheproof}

To finish the proof, we want to construct a rainbow colouring of the edges of $G$ from the given random colouring. By Lemmas \ref{enoughnormalpaths} and \ref{maindistinct}, we can assume that every pair of vertices is joined by at least $c \log n$ independent $r$-paths for a constant $c >0$, and that $A_{v,w}$ does not hold for any pair $\{v,w\}$ of vertices.

Recall that we call a pair of vertices dangerous if it is joined by at most $K$ independent rainbow $r$-paths  in the original random colouring. Take an arbitrary ordering of the dangerous pairs. We will go through them one by one, each time selecting an $r$-path joining the dangerous pair, changing its colours if necessary to make it a rainbow path, then \emph{flagging} all edges on the path to ensure they do not get recoloured later on.

Let $\{v,w\}$ be the pair we consider. It is joined by at least $c \log n \geq r L$ independent $r$-paths if $n$ is large enough. We want to find one such path where no edge is flagged yet.

So take a set $\mc{I}$ of $r L$ independent $r$-paths joining $v$ and $w$, and consider any such path $P_1$ in $\mc{I}$. Either none of its edges is flagged --- in this case, we have found our path. Otherwise, it contains (at least) one edge which is also on an $r$-path joining a dangerous pair other than $\{v,w\}$. For this dangerous pair, one path of length $r$ was flagged previously. Therefore, at most $r-1$ of the other paths in $\mc{I}$ can contain edges flagged for the same dangerous pair. Discard those paths and $P_1$. We are left with at least $r (L-1)$ paths joining $v$ and $w$. Select any such path $P_2$ and proceed in the same way as with $P_1$: Either $P_2$ is completely unflagged, or we remove $P_2$ and any other path with edges flagged for the same dangerous pair as $P_2$ from consideration, and are left with at least $r(L-2)$ paths. We repeat this procedure until we find a completely unflagged path. This happens at $P_L$ at the latest. Otherwise, if $P_L$ also contains an edge flagged for a new 
dangerous pair, then 
$A_{v,w}$ holds, a contradiction. 

Therefore, there is a path joining $\{v,w\}$ where no edge is flagged at the time we consider $\{v,w\}$. Select this path, change its colours if necessary to make it a rainbow path, then flag all its edges and move on to the next dangerous pair. Repeat this procedure until all dangerous pairs have been assigned rainbow paths.
                                                                                                                                                                                                                                                            
It only remains to check that during our recolouring procedure no previously non-dangerous pair has lost all of its rainbow paths. Let $\{v,w\}$ be a pair that was not dangerous before we started recolouring. Since it was originally joined by at least $K=rL$ rainbow paths, by the same argument as above for dangerous pairs, one of these paths must be completely unflagged, otherwise $A_{v,w}$ would hold. This path has retained its original colours and is therefore still a rainbow path. So all pairs of vertices are joined by rainbow paths now.
\qed

\bibliographystyle{plainnat}

\end{document}